\numberwithin{equation}{section}
\makeatletter \renewcommand{\p@enumii}{} \makeatother
\newtheorem{cor}[equation]{Corollary}
\newtheorem{lem}[equation]{Lemma}
\newtheorem{prop}[equation]{Proposition}
\newtheorem{thm}[equation]{Theorem}
\newtheorem{problem}[equation]{Problem}
\newtheorem{construction}[equation]{Construction}
\Crefname{lem}{Lemma}{Lemmas}
\crefname{lem}{lemma}{lemmas}
\Crefname{cor}{Corollary}{Corollaries}
\crefname{cor}{corollary}{corollaries}
\Crefname{prop}{Proposition}{Propositions}
\crefname{prop}{proposition}{propositions}
\Crefname{thm}{Theorem}{Theorems}
\crefname{thm}{theorem}{theorems}
\theoremstyle{definition}
\newtheorem{defn}[equation]{Definition}
\newtheorem{eg}[equation]{Example}
\newtheorem*{notation*}{Notation}
\newtheorem{rem}[equation]{Remark}
\theoremstyle{remark}
\Crefname{rem}{Remark}{Remarks}
\crefname{rem}{remark}{remarks}
\newcounter{case}
\numberwithin{case}{equation}
\renewcommand{\thecase}{\arabic{case}}
\Crefname{case}{Case}{Cases}
\crefname{case}{case}{cases}
\newcounter{subcase}
\numberwithin{subcase}{case}
\crefname{subcase}{subcase}{subcases}
\Crefname{subcase}{Subcase}{Subcases}
\newcounter{subsubcase}
\numberwithin{subsubcase}{subcase}
\crefname{subsubcase}{subsubcase}{subsubcases}
\Crefname{subsubcase}{Subsubcase}{Subsubcases}
\theoremstyle{definition}
\numberwithin{aid}{section}
\newcommand{\oldaid}{}
\let\oldaid=\aid
\renewcommand{\aid}{\bigbreak\oldaid}
\newcommand{\oldendaid}{}
\let\oldendaid=\endaid
\renewcommand{\endaid}{\oldendaid\bigskip\hrule width\textwidth \bigbreak}
\let\oldenumerate=\enumerate
\def\enumerate{\oldenumerate \itemsep=\smallskipamount}
\let\olditemize=\itemize
\def\itemize{\olditemize \itemsep=\smallskipamount}
\DeclareMathOperator{\Aut}{Aut}
\newcommand{\pref}[1]{\textup(\ref{#1}\textup)}
\newcommand{\noprelistbreak}{\smallskip\@nobreaktrue\nopagebreak} 
\begin{document}

\title{Some conditions implying stability of graphs}

\author{Ademir Hujdurović}
\address{University of Primorska, UP IAM, Muzejski trg 2, 6000 Koper, Slovenia and University of Primorska, UP FAMNIT, Glagolja\v ska 8, 6000 Koper, Slovenia}
\email{ademir.hujdurovic@upr.si}

\author{Đorđe Mitrović}
\address{University of Primorska, UP FAMNIT, Glagolja\v ska 8, 6000 Koper, Slovenia}
\email{mitrovic98djordje@gmail.com}

\date{\today}

\begin{abstract}
   A graph $X$ is said to be {\em unstable} if the direct product $X\times K_2$ (also called the {\em canonical double cover} of $X$) has automorphisms that do not come from automorphisms of its factors $X$ and $K_2$. It is {\em non-trivially unstable} if it is unstable, connected, non-bipartite, and distinct vertices have distinct sets of neighbours.
   
   In this paper, we prove two sufficient conditions for stability of graphs in which every edge lies on a triangle, revising an incorrect claim of Surowski and filling in some gaps in the proof of another one. We also consider triangle-free graphs, and prove that there are no non-trivially unstable triangle-free graphs of diameter 2. An interesting construction of non-trivially unstable graphs is given and several open problems are posed.
\end{abstract}

\maketitle


\section{Introduction}

All graphs considered in this paper are finite, simple and undirected. For a graph $X$, we denote by $V(X)$, $E(X)$ and $\Aut(X)$ the vertex set, the edge set and the automorphism group of $X$, respectively. 
{\it The canonical bipartite double cover} (also called {\it the bipartite double cover} or {\it the Kronecker cover}) of a graph $X$, denoted by $BX$, is the direct product $X \times K_2$ (where $K_2$ denotes the complete graph on two vertices).  
This means that $V(BX)=V(X)\times \{0,1\}$ and $E(BX)=\{\{(x,0),(y,1)\}\mid \{x,y\}\in E(X)\}$. 

Canonical double covers have proven to play an important role in algebraic graph theory and have been studied by multiple groups of authors from a variety of perspectives \cite{
FengKutnar2fold,Hujdurovic,Krnc,Matsushita, 
Kneser2017,RegularEmbeddingsNedela,Pacco,WallerDouble,ZelinkaDouble}.
It is well-known that $BX$ is connected if and only if $X$ is connected and non-bipartite, see \cite{HIK}.
It is easy to see that $\Aut(BX)$ contains a subgroup isomorphic to $\Aut(X)\times S_2$.  However, determining the full automorphism group of $BX$ is not as trivial. Hammack and Imrich \cite{HI} investigated vertex-transitivity of the direct product of graphs, and proved that for a non-bipartite graph $X$ and a bipartite graph $Y$, their direct product $X\times Y$ is vertex-transitive if and only if both $BX$ and $Y$ are vertex-transitive. Hence, the problem of vertex-transitivity of the direct product of graphs reduces to the problem of vertex-transitivity of canonical double covers.

If $\Aut(BX)$ is isomorphic to $\Aut(X)\times S_2$ then the graph $X$ is called {\it stable}, otherwise it is called {\it unstable}. 
This concept was first defined by Maru\v si\v c et al. \cite{maruvsivc1989characterization}. A graph is said to be {\em twin-free} (also called {\em worthy}, {\em vertex-determining} or {\em irreducible}) if distinct vertices have different neighbourhoods.
It is not difficult to prove that disconnected graphs, bipartite graphs with non-trivial automorphism groups, and graphs containing twin vertices are unstable. These graphs are considered {\em trivially unstable}. An unstable graph is said to be {\em non-trivially unstable} if it is non-bipartite, connected, and twin-free. 

In \cite{WilsonUnExpected}, Wilson gave several sufficient conditions for a graph to be unstable, which he then applied to families of circulants, Rose Window graphs, toroidal graphs and generalized Petersen graphs. The characterization of unstable generalized Petersen graphs was obtained by Qin, Xia and Zhou in \cite{Qin21}. Stability of circulant graphs was studied by Qin, Xia and Zhou in \cite{QinXiaZhou}, where it was proven that there are no non-trivially unstable arc-transitive circulants. In \cite{FernandezHujdurovicCirc}, it was proven by Fernandez and Hujdurovi\' c that there are no non-trivially unstable circulants of odd order. In \cite{morris2021automorphisms}, Morris extended this result by proving that there are no non-trivially unstable Cayley graphs on Abelian groups of odd order. In \cite{HMMAtMost7}, the complete classification of unstable circulants of valency at most $7$ was obtained, while in \cite{HMMAuto} several constructions of unstable circulants were presented, and unstable circlants of order $2p$ (with $p$ a prime) were classified.

Although it is in some sense expected that most graphs are stable, there are not many results giving sufficient conditions for a general graph to be stable. 
The main motivation for this paper comes from the work of Surowski \cite{SurowskiStabArcTrans} on stability of arc-transitive graphs, where he describes two sufficient stability conditions, namely {\cite[Proposition~2.1]{SurowskiStabArcTrans}} for vertex-transitive graphs of diameter at least 4 and {\cite[Proposition~2.2]{SurowskiStabArcTrans}} for strongly regular graphs. 
The first of these results has been shown not to hold by Lauri, Mizzi and Scapellato in \cite{LauriMizziScapellato}, where the authors constructed an infinite family of counterexamples. In the same article, the authors pointed out that the proof of the second result seems incomplete, hence asking for a further investigation of stability of strongly regular graphs.

In \cref{SectionTriangles}, we will discuss the original statement of Surowski's first stability criterion and an infinite family of counterexamples by Lauri, Mizzi and Scapellato (see \cref{LMScounterexample}). Then we will explain the mistake in the original proof and fix it by introducing additional assumptions, consequently obtaining a valid stability criterion in \cref{SurowskiDiameterUpdated}, which no longer requires the graph to be vertex-transitive. Then we turn our attention to the second result of Surowski concerning strongly regular graphs, which turned out to be correct, although its original proof was slightly incomplete. We prove a generalized version of this result in \cref{StableTriangleEdgeNonEdge} that applies to a wider class of graphs. 

In \cref{SectionTriangleFree}, to contrast the previous results which required every edge of a graph to lie on a triangle, we consider triangle-free graphs, and prove that there are no non-trivially unstable triangle-free graphs of diameter $2$ (see  \cref{TriangleFreeStable}).

In \cref{sec:construction}, we present an interesting construction of non-trivially unstable graphs, which shows that every connected, non-bipartite, twin-free graph of order $n$ is an induced subgraph of a non-trivially unstable graph of order $n+4$. With the help of a computer, we check that most of the non-trivially unstable graphs up to 10 vertices can be obtained using this construction.

\section{Preliminaries} \label{sec:preliminaries}
Let $X$ be a graph and $BX$ its bipartite double cover. For an automorphism $\varphi$  of $X$, it is easy to see that the function   $\overline{\varphi}$, defined by $\overline{\varphi}(x,i)=(\varphi(x),i)$ is an automorphism of $BX$, called \textit{the lift of $\varphi$}.
It is straightforward to check that the function $\tau$ defined by $\tau(x,i)=(x,i+1)$ (the second coordinate is calculated modulo 2) is also an automorphism of $BX$.

The subgroup generated by $\tau$ and the lifts $\overline{\varphi}$ with $\varphi\in\Aut(X)$ is isomorphic to $\Aut(X)\times S_2$. Automorphisms of $BX$ that belong to this subgroup are called the {\em expected automorphisms} of $BX$ (see \cite{WilsonUnExpected}). If an automorphism of $BX$ does not belong to this subgroup, it is called {\em unexpected}.
Using this terminology, a graph $X$ is stable if and only if every automorphism of $BX$ is expected.

The following result is known and will be used later on for establishing stability of graphs. For the sake of completeness, we provide a short proof.

\begin{lem}\label{lem:stability (x,0)(x,1)}
Let $X$ be a connected, non-bipartite graph, and let $\alpha$ be an automorphism of $BX$. Then $\alpha$ is an expected automorphism of $BX$ if and only if for every $x\in V(X)$, it holds that $\alpha(\{(x,0),(x,1)\})=\{(y,0),(y,1)\}$ for some $y\in V(X)$.
\end{lem}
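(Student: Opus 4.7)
The plan is to argue both directions separately, with the forward direction being a routine verification and the reverse direction requiring a short connectivity argument to recognise the automorphism.

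For the forward implication, observe that the two generators of $\Aut(X) \times S_2$ visibly preserve the fibre system $\{\{(x,0),(x,1)\} : x \in V(X)\}$: the swap $\tau$ sends each fibre to itself, while a lift $\overline{\varphi}$ sends the fibre over $x$ to the fibre over $\varphi(x)$. Hence any composition — i.e.\ any expected automorphism — sends fibres to fibres, which is exactly the stated condition.

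For the reverse implication, suppose $\alpha$ sends each fibre to a fibre. Then there is a well-defined permutation $\varphi$ of $V(X)$ satisfying $\alpha(\{(x,0),(x,1)\}) = \{(\varphi(x),0),(\varphi(x),1)\}$. I would first check $\varphi \in \Aut(X)$: given $\{x,y\} \in E(X)$, the edge $\{(x,0),(y,1)\}$ of $BX$ is mapped by $\alpha$ to an edge joining the fibres over $\varphi(x)$ and $\varphi(y)$, and the existence of any edge of $BX$ between two distinct fibres forces $\{\varphi(x),\varphi(y)\} \in E(X)$. Applying the same argument to $\alpha^{-1}$ (which also sends fibres to fibres) shows $\varphi^{-1} \in \Aut(X)$ as well, so $\varphi$ is an automorphism. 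Now set $\beta = \overline{\varphi}^{-1}\circ \alpha$; this is an automorphism of $BX$ that fixes each fibre setwise.

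The final step — which is the only place where something has to be argued, and where connectedness of $X$ enters — is to show $\beta \in \{\id, \tau\}$. On any single fibre $\{(x,0),(x,1)\}$, $\beta$ either fixes both points or swaps them; let $S$ and $T$ denote the corresponding subsets of $V(X)$, so $V(X) = S \sqcup T$. If $x \in S$ and $y \sim x$ in $X$, then $\beta(y,1)$ is adjacent to $\beta(x,0) = (x,0)$ and lies in the fibre over $y$, forcing $\beta(y,1) = (y,1)$, so $y \in S$; the symmetric argument shows that $T$ is closed under adjacency as well. Since $X$ is connected, either $S = V(X)$, giving $\beta = \id$ and $\alpha = \overline{\varphi}$, or $T = V(X)$, giving $\beta = \tau$ and $\alpha = \overline{\varphi}\circ \tau$. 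In both cases $\alpha$ is expected. I do not expect any real obstacle; the main thing to be careful about is separating the two cases on each fibre cleanly and noting that non-bipartiteness does not actually enter the argument (it is natural context because it is equivalent to $BX$ being connected).
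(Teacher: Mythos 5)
Your proof is correct, and it takes a mildly but genuinely different route from the paper's in one respect: how the residual factor of $\tau$ is handled. The paper first uses that $BX$ is connected and bipartite with classes $V(X)\times\{0\}$ and $V(X)\times\{1\}$ (this is where non-bipartiteness of $X$ enters), so that $\alpha$ either preserves or swaps these classes; after composing with $\tau$ it may assume the classes are preserved, at which point the fibre condition literally reads $\alpha(x,i)=(\varphi(x),i)$ and one only has to note that such a map is an automorphism of $BX$ exactly when $\varphi\in\Aut(X)$. You instead extract $\varphi$ directly from the action on fibres, verify $\varphi\in\Aut(X)$, and then analyse $\beta=\overline{\varphi}^{-1}\circ\alpha$ fibre by fibre, propagating the fix-or-swap dichotomy along edges of $X$ and invoking connectedness of $X$ (rather than of $BX$) to conclude $\beta\in\{\id,\tau\}$. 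The payoff of your version is the observation you make explicitly: non-bipartiteness is never used, so the equivalence holds for any connected graph, whereas the paper's normalization step genuinely relies on $BX$ being connected. The cost is that your argument is a bit longer; also note that the appeal to $\alpha^{-1}$ to get $\varphi^{-1}\in\Aut(X)$ is dispensable for finite graphs, since an edge-preserving bijection of a finite graph is automatically an automorphism. Both proofs share the same essential core, namely identifying the induced permutation $\varphi$ of $V(X)$ and checking it is an automorphism.
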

\begin{proof}
Since $X$ is a connected and non-bipartite graph, it follows that $BX$ is connected and bipartite with bipartite sets $V(X)\times \{0\}$ and $V(X)\times \{1\}$. If $\alpha$ interchanges the bipartite sets, then one can consider $\alpha\tau$ instead. Hence, without loss of generality, we can assume that $\alpha$ preserves the sets $V(X)\times \{0\}$ and $V(X)\times \{1\}$. Then $\alpha$ satisfies the condition from Lemma \ref{lem:stability (x,0)(x,1)} if and only if $\alpha(x,i)=(\varphi(x),i)$, for some permutation $\varphi$ of $V(BX)$. It is easy to see that the map $(x,i)\mapsto (\varphi(x),i)$ is an automorphism of $BX$ if and only if $\varphi$ is an automorphism of $X$.
\end{proof}

It is not difficult to show that even cycles are unstable, while odd cycles are stable. Moreover, we have the following easy example.

\begin{eg}[Qin-Xia-Zhou,{\cite[Example~2.2]{QinXiaZhou}}]\label{CompStab}
The complete graph $K_n$ is unstable if and only if $n = 2$.
\end{eg}

Recall that a {\em  strongly regular graph} with parameters $(n,k,\lambda,\mu)$ is a $k$-regular connected graph of diameter $2$ on $n$ vertices such that every edge lies on $\lambda$ triangles, and any two non-adjacent vertices have exactly $\mu$ neighbours in common. Note that $\mu>0$ and that a strongly regular graph is twin-free if and only if $k>\mu$.

For a graph $X$ and a vertex $x$ of $X$, we will denote by $X_i(x)$ the set of vertices of $X$ at distance $i$ from $x$. We will also sometimes write $N_X(x)$ for the set of neighbours of $x$, instead of $X_1(x)$.

\section{Stability of graphs with many triangles}\label{SectionTriangles}

In their article \cite{LauriMizziScapellato} on TF-automorphisms of graphs, Lauri, Mizzi and Scapellato describe a method for constructing unstable graphs of an arbitrarily large diameter with the property that every edge lies on a triangle.
This contradicts {\cite[Proposition~2.1]{SurowskiStabArcTrans}}, a result of Surowski claiming that a connected, vertex-transitive graph of diameter at least $4$ with every edge on a triangle is stable. The particular family of counterexamples described by Lauri, Mizzi and Scapellato (based on their result {\cite[Theorem~5.1]{LauriMizziScapellato}}) can be reformulated in terms of the  lexicographic product of graphs as follows (see \cite[Definition 4.2.1.]{DobsonBook} for the definition of the lexicographic product of graphs).

\begin{prop}\label{LMScounterexample}
Let $H$ be a non-trivial vertex-transitive graph that is also twin-free and bipartite. Let $m\geq 8$ be a positive integer. Then $C_m\wr H$ is a connected, non-trivially unstable, vertex-transitive graph of diameter at least $4$ whose every edge lies on a triangle.
\end{prop}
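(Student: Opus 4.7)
The plan is to verify each assertion of the conclusion in turn. The routine graph-theoretic conditions follow from standard properties of the lexicographic product; the substantive content is the construction of an unexpected automorphism of the canonical double cover to witness non-trivial instability.

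I will first dispense with the routine items. Vertex-transitivity and connectedness of $C_m\wr H$ are immediate from the corresponding properties of $C_m$ and $H$. The diameter of $C_m\wr H$ equals $\lfloor m/2\rfloor\ge 4$ for $m\ge 8$: two vertices with different $C_m$-coordinates are at distance equal to their $C_m$-distance, since arbitrary $H$-coordinates can be used along cross-fiber paths, while vertices in the same fiber are at distance at most $2$. Every edge lies on a triangle: a within-fiber edge is extended to a triangle by any vertex in an adjacent $C_m$-fiber, and a cross-fiber edge $(u_1,v_1)(u_2,v_2)$ is extended by any $(u_1,v_3)$ with $v_3\sim_H v_1$, which exists because $H$ is non-trivial, vertex-transitive and twin-free, hence has positive minimum degree. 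The existence of a triangle rules out bipartiteness. For twin-freeness, two distinct vertices $(u_1,v_1),(u_2,v_2)$ of $C_m\wr H$ have distinct neighbourhoods: if $u_1\ne u_2$, twin-freeness of $C_m$ (which holds for $m\ge 5$) supplies a vertex of $C_m$ adjacent to exactly one of $u_1,u_2$, yielding distinct cross-fiber neighbourhoods; if $u_1=u_2$, twin-freeness of $H$ furnishes a vertex of $H$ adjacent to exactly one of $v_1,v_2$, yielding distinct within-fiber neighbourhoods.

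For instability I plan to construct an unexpected automorphism $\alpha$ of $B(C_m\wr H)$ and apply \cref{lem:stability (x,0)(x,1)}. The construction exploits that $H$ is bipartite: writing $V(H)=A\sqcup B$, the canonical double cover $BH$ splits into two connected components, one containing $(a,0)$ for $a\in A$ together with $(b,1)$ for $b\in B$, and the other its complement. Within each fiber $\{u\}\times V(H)\times\{0,1\}$ of $B(C_m\wr H)$ the same splitting occurs. Fixing any non-identity bipartition-preserving automorphism $\phi\in\Aut(H)$, I will define $\alpha$ to act as $\phi$ on the first component of every fiber and as the identity on the second: explicitly, $\alpha((u,v),0)=((u,\phi(v)),0)$ when $v\in A$ and fixes the vertex otherwise, while $\alpha((u,v),1)=((u,\phi(v)),1)$ when $v\in B$ and fixes the vertex otherwise. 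Within-fiber edges are preserved because $\phi$ is a bipartition-preserving automorphism of $H$, and cross-fiber edges $((u_1,v_1),0)\sim((u_2,v_2),1)$ are preserved automatically, since the lex-product adjacency $u_1\sim_{C_m}u_2$ is independent of the $H$-coordinates. To see $\alpha$ is unexpected, pick any $v\in V(H)$ with $\phi(v)\ne v$; then $\alpha(\{((u,v),0),((u,v),1)\})$ consists of two vertices with distinct first coordinates, violating the criterion of \cref{lem:stability (x,0)(x,1)} for expected automorphisms.

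The main obstacle is ensuring the existence of a non-identity bipartition-preserving automorphism of $H$. Since $H$ is connected, vertex-transitive, twin-free, and bipartite, the subgroup of bipartition-preserving automorphisms has index at most $2$ in $\Aut(H)$ and acts transitively on each part; hence it is non-trivial as soon as each part has at least two vertices. The only excluded case is $H=K_2$, for which an alternative construction is needed, combining the edge-swap of $K_2$ with a parity-respecting adjustment on the $C_m$-factor; I would handle this case separately.
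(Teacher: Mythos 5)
Your main construction is correct, and it is the natural one: inside each fibre $\{u\}\times V(H)\times\{0,1\}$ of $B(C_m\wr H)$ the induced subgraph is the disconnected cover $BH\cong H\sqcup H$, the cross-fibre edges depend only on the $C_m$-coordinates, and letting a colour-preserving $\phi\neq\id$ act on one of the two components of each fibre while fixing the other produces an automorphism that violates the criterion of Lemma~\ref{lem:stability (x,0)(x,1)}. (The paper states this proposition without proof, as a reformulation of Lauri--Mizzi--Scapellato, so there is nothing in-paper to compare your argument against.) Two small points in the routine part: connectedness of $H$, which you invoke to produce $\phi$, is not among the hypotheses --- for disconnected vertex-transitive $H$ you should choose the $2$-colouring so that an automorphism permuting the isomorphic components preserves both colour classes; and in the twin-freeness argument the vertex of $C_m$ adjacent to exactly one of $u_1,u_2$ may coincide with $u_1$ or $u_2$, so a line is needed to arrange a witness outside $\{u_1,u_2\}$ (possible for $m\geq 5$).

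The genuine gap is the case $H=K_2$, which you defer with a one-clause sketch, and that sketch cannot be completed. Any ``parity-respecting adjustment'' that flips the $K_2$-coordinate fibrewise must, in order to preserve the perfect matching joining $F_u^0=\{u\}\times V(K_2)\times\{0\}$ to $F_u^1$, perform the same flip on both levels of a fibre, and is then the lift of an automorphism of $C_m\wr K_2$. More is true: for $m\ge 6$ the two-element fibres $F_u^{\varepsilon}$ are exactly the pairs of vertices of $B(C_m\wr K_2)$ with four common neighbours, so every automorphism preserves this partition; consecutive fibres are completely joined (imposing no constraint on the inner permutations), while the matching between $F_u^0$ and $F_u^1$ forces the two inner permutations of a matched pair to agree. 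Counting gives $|\Aut(B(C_m\wr K_2))|=4m\cdot 2^{m}=2\,|\Aut(C_m\wr K_2)|$, i.e.\ $C_m\wr K_2$ is \emph{stable}. Thus the statement, read literally, fails at $H=K_2$; what your argument actually proves is the proposition under the additional hypothesis that $H$ admits a non-identity automorphism fixing both colour classes (which holds for every admissible $H$ except $K_2$), and the correct resolution is to exclude $H=K_2$ from the statement rather than to handle it separately.
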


A particular example pointed out by Lauri, Mizzi and Scapellato at the end of their article is the graph $C_8\wr C_6$. In the language of \cref{LMScounterexample}, we set $m=8$ and $H=C_6$.

In his proof, Surowski uses the assumption that every edge lies on a triangle to describe a part of the distance partition of $BX$ with respect to a fixed vertex $(x,0)$. In particular, Surowski shows that the vertex $(x,1)$ is at distance $3$ from $(x,0)$. The author then attempts to show that $(x,1)$ is the unique vertex at distance $3$ from $(x,0)$ in $BX$ with no neighbours lying at distance $4$ from $(x,0)$. This would imply that any automorphism of $BX$ fixing $(x,0)$ must also fix $(x,1)$, which is equivalent to $X$ being stable as it is assumed to be vertex-transitive (see for example \cite[Lemma~2.4]{FernandezHujdurovicCirc}).

However, the last claim about $(x,1)$ does not always hold. For example, if $X=C_8\wr C_6$ and $x$ is its arbitrary vertex, then any of the $3$ vertices lying in the same copy of $C_6$ as $x$, that are distinct from $x$ and are not adjacent to it, induce vertices in $BX$ at distance $3$ from $(x,0)$, all of whose neighbours are at distance at most $2$ from $(x,0)$. In particular, it is not possible to distinguish $(x,1)$ from other elements of $(BX)_3(x,0)$ in the manner proposed by Surowski.

We remedy this situation by adding additional assumptions on $X$. On the other hand, we no longer require $X$ to be vertex-transitive.

\begin{prop}\label{SurowskiDiameterUpdated}
Let $X$ be a non-trivial connected graph. Assume that $X$ satisfies the following conditions.

\begin{enumerate}
    \item \label{SurowskiDiameterUpdated-triangle} Every edge of $X$ lies on a triangle.
    \item \label{SurowskiDiameterUpdated-distance} For every $x\in V(X)$, it holds that
        \begin{enumerate}
            \item \label{SurowskiDiameterUpdated-distance-2} every vertex at distance $2$ from $x$ has a neighbour at distance $3$ from $x$, and
            \item \label{SurowskiDiameterUpdated-distance-3} every vertex at distance $3$ from $x$ has a neighbour at distance $4$ from $x$.
        \end{enumerate}
\end{enumerate}

Then $X$ is stable.
\end{prop}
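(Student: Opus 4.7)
The plan is to apply \cref{lem:stability (x,0)(x,1)}. Fix an automorphism $\alpha$ of $BX$ and a vertex $x\in V(X)$; my goal is to show $\alpha(\{(x,0),(x,1)\})=\{(y,0),(y,1)\}$ for some $y\in V(X)$. Composing with $\tau$ if necessary, I may assume $\alpha((x,0))=(y,0)$, so it suffices to show $\alpha((x,1))=(y,1)$. Since $X$ is non-trivial, connected, and every edge lies on a triangle, $X$ is non-bipartite (so $BX$ is connected), and a triangle through $x$ yields $d_{BX}((x,0),(x,1))=3$; hence $\alpha((x,1))\in (BX)_3((y,0))$.

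The key step, adapted from Surowski's original strategy, is the following characterisation: \emph{$(x,1)$ is the unique vertex of $(BX)_3((x,0))$ having no neighbour in $(BX)_4((x,0))$.} This property is preserved by $\alpha$, and since the hypotheses of the proposition hold at $y$ as well, it forces $\alpha((x,1))=(y,1)$, which finishes the proof.

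To verify the characterisation, first note that the neighbours of $(x,1)$ in $BX$ are exactly the vertices $(u,0)$ with $u\in N_X(x)$; by hypothesis (1), each such $u$ shares a neighbour with $x$, so $(u,0)\in (BX)_2((x,0))$, and $(x,1)$ has no neighbour at distance $4$. Now take $(z,1)\in (BX)_3((x,0))$ with $z\neq x$. Because $d_{BX}((x,0),(z,1))$ equals the length of a shortest odd $x$-to-$z$ walk in $X$, one has $d_X(x,z)\in\{2,3\}$. If $d_X(x,z)=2$, hypothesis (2a) yields a neighbour $z'\in X_3(x)$ of $z$; then $(z',0)$ is a neighbour of $(z,1)$, and since $d_X(x,z')=3$ every even walk from $x$ to $z'$ has length at least $4$, so $(z',0)\in (BX)_4((x,0))$. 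If $d_X(x,z)=3$, hypothesis (2b) provides a neighbour $z'\in X_4(x)$, and $d_X(x,z')=4$ likewise yields $(z',0)\in (BX)_4((x,0))$.

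I expect the main obstacle to be the careful bookkeeping translating $BX$-distances into walk parities in $X$ --- in particular, ruling out $d_X(x,z)\geq 4$ in the case split above (the shortest odd walk would then have length at least $5$, contradicting $(z,1)\in (BX)_3((x,0))$), and confirming that hypothesis (1) indeed supplies the triangle through $x$ needed for $d_{BX}((x,0),(x,1))=3$. Once these parity observations are in place, the argument is a short case analysis exploiting hypotheses (2a) and (2b) exactly once each.
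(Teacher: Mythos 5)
Your proposal is correct and follows essentially the same route as the paper: show that $(x,1)$ is the unique vertex of $(BX)_3((x,0))$ with no neighbour in $(BX)_4((x,0))$, using hypotheses (2a) and (2b) for the two cases $d_X(x,z)\in\{2,3\}$, and then conclude via \cref{lem:stability (x,0)(x,1)}. The only cosmetic difference is that the paper first disposes of the case where some $X_2(x)$ is empty (forcing $X$ complete), whereas your case analysis handles that situation vacuously.
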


\begin{proof}
It is easy to show that if $X_2(x)$ is empty for some $x\in V(X)$, and $X$ satisfies the assumptions, then $X$ is complete and therefore, stable by \cref{CompStab} (assumption \pref{SurowskiDiameterUpdated-triangle} rules out the case $X=K_2$). We can therefore assume that $X_2(x)$ is non-empty for all $x\in V(X)$. Conditions \pref{SurowskiDiameterUpdated-distance}\pref{SurowskiDiameterUpdated-distance-2} and \pref{SurowskiDiameterUpdated-distance}\pref{SurowskiDiameterUpdated-distance-3} then imply that also $X_3(x)$ and $X_4(x)$ are non-empty for all $x\in V(X)$.

Observe that
\[(BX)_2(x,0)=X_1(x)\times \{0\}\cup X_2(x)\times \{0\},\]
and
\[(x,1)\in (BX)_3(x,0)\subseteq \{(x,1)\}\cup X_2(x)\times \{1\}\cup X_3(x)\times \{1\}.\]

From here, it is clear that all neighbours of $(x,1)$ in $BX$ lie in $(BX)_2(x,0)$. Consequently, $(x,1)$ has no neighbours in $(BX)_4(x,0)$. We will show that $(x,1)$ is the unique element of $(BX)_3(x,0)$ with this property. To show this, we observe the following.
\begin{itemize}
    \item If $y\in X_2(x)$, then by assumption \pref{SurowskiDiameterUpdated-distance}\pref{SurowskiDiameterUpdated-distance-2}, it has a neighbour $z\in V(X)$ such that $z\in X_3(x)$. Then $(z,0)\in (BX)_4(x,0)$ is a neighbour of $(y,1)$.
    \item If $y\in X_3(x)$, then by assumption \pref{SurowskiDiameterUpdated-distance}\pref{SurowskiDiameterUpdated-distance-3}, it has a neighbour ~$z\in V(X)$ such that $z\in X_4(x)$. By the same arguments as before, it follows that $(z,0)\in (BX)_4(x,0)$ is a neighbour of $(y,1)$.
\end{itemize}

Let $\alpha\in\Aut(BX)$ be arbitrary. As $X$ is connected and non-bipartite (as it contains triangles), after composing $\alpha$ with $\tau$ if necessary, we can assume that
\[\text{$\alpha(V(X)\times \{i\})=V(X)\times \{i\}$ for $i\in \{0,1\}$.}\]
Let $x\in V(X)$ be arbitrary and choose $y\in V(X)$ such that $\alpha(x,0)=(y,0)$. As $\alpha$ is an automorphism, it preserves distances, so it follows that
\[\alpha(x,1)\in (BX)_3(\alpha(x,0)) = (BX)_3(y,0).\]

As $(x,1)$ has no neighbours in $(BX)_4(x,0)$, it follows that $\alpha(x,1)$ has no neighbours in $(BX)_4(y,0)$. As we have already observed, the only element in $(BX)_3(y,0)$ with this property is $(y,1)$. It follows that $\alpha(x,1)=(y,1)$. The stability of $X$ now follows by Lemma \ref{lem:stability (x,0)(x,1)}.
\end{proof}

Our updated criterion has a nice application to distance-regular graphs.
Recall that a regular graph $X$ of diameter $d$ is said to be {\em distance-regular} if there exists an array of integers $\{b_0,\ldots,b_{d-1},c_1,\ldots,c_d\}$, called the {\em intersection array}, with the property that for all $1\leq j\leq d$, it holds that for any pair of vertices $x,y\in V(X)$ at distance $j$ in $X$, the number of neighbours of $y$ at distance $j+1$ from $x$ is $b_j$, while the number of neighbours of $y$ at distance $j-1$ from $x$ is $c_j$.

\begin{cor}\label{StableDistanceReg}
Let $X$ be a connected distance-regular graph of diameter $d$ with an intersection-array $\{b_0,\ldots,b_{d-1},c_1,\ldots,c_d\}$. If $d\geq 4$, $b_0>b_1+1$ and $b_2,b_3\geq 1$, then $X$ is stable.
\end{cor}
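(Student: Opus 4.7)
The plan is to apply \cref{SurowskiDiameterUpdated} directly, translating each of its three hypotheses into an inequality on the intersection array. Recall the standard facts: in a distance-regular graph $X$ with intersection array $\{b_0,\ldots,b_{d-1},c_1,\ldots,c_d\}$, if $y \in X_i(x)$ then $y$ has exactly $c_i$ neighbours in $X_{i-1}(x)$, exactly $b_i$ neighbours in $X_{i+1}(x)$, and the remaining $a_i := b_0 - b_i - c_i$ neighbours in $X_i(x)$, where $b_0 = k$ is the valency and $c_1 = 1$. Moreover, connectedness together with distance-regularity ensures that $X_i(x) \neq \emptyset$ for every $0 \leq i \leq d$ and every vertex $x$, so for $d \geq 4$ the sets $X_2(x), X_3(x), X_4(x)$ are all nonempty.

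First I would verify condition \pref{SurowskiDiameterUpdated-triangle} of the proposition, namely that every edge of $X$ lies on a triangle. For any edge $\{x,y\}$, the number of common neighbours of $x$ and $y$ equals $a_1 = b_0 - b_1 - c_1 = b_0 - b_1 - 1$, and the hypothesis $b_0 > b_1 + 1$ rearranges exactly to $a_1 \geq 1$. Hence every edge lies on a triangle.

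Next, I would verify condition \pref{SurowskiDiameterUpdated-distance}. For any $x \in V(X)$ and any $y \in X_2(x)$, the number of neighbours of $y$ lying in $X_3(x)$ is precisely $b_2$, which is $\geq 1$ by hypothesis; this establishes \pref{SurowskiDiameterUpdated-distance-2}. Similarly, for $y \in X_3(x)$, the number of neighbours of $y$ lying in $X_4(x)$ is $b_3 \geq 1$, giving \pref{SurowskiDiameterUpdated-distance-3}. (Here we use $d \geq 4$ to ensure that $X_4(x)$ is a meaningful set of vertices, i.e.\ that distance $4$ does not exceed the diameter.)

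All hypotheses of \cref{SurowskiDiameterUpdated} are then satisfied, so $X$ is stable. No real obstacle presents itself: the entire argument is a dictionary between the three bullet points of the proposition and the three numerical conditions in the hypothesis. The only subtlety worth double-checking is the nonemptiness of $X_3(x)$ and $X_4(x)$, but this is automatic from $d \geq 4$ combined with the fact that in a distance-regular graph the distance partition from any vertex has the same shape.
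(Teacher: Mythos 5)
Your proof is correct and takes exactly the same route as the paper: translate $b_0>b_1+1$ into $a_1\geq 1$ (every edge on a triangle) and $b_2,b_3\geq 1$ into conditions \pref{SurowskiDiameterUpdated-distance}\pref{SurowskiDiameterUpdated-distance-2} and \pref{SurowskiDiameterUpdated-distance}\pref{SurowskiDiameterUpdated-distance-3}, then invoke \cref{SurowskiDiameterUpdated}. The paper states this in one sentence; your version simply spells out the intersection-array bookkeeping.
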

\begin{proof}
The assumption that $b_0>b_1+1$ guarantees that every edge of $X$ lies on a triangle, while the assumption that $b_2,b_3\geq 1$ guarantees that the conditions \pref{SurowskiDiameterUpdated-distance}\pref{SurowskiDiameterUpdated-distance-2} and \pref{SurowskiDiameterUpdated-distance}\pref{SurowskiDiameterUpdated-distance-3} from \cref{SurowskiDiameterUpdated} are satisfied.
\end{proof}

We now turn our attention to the following result of Surowski {\cite[Proposition~2.2]{SurowskiStabArcTrans}}.
\begin{prop}[Surowski {\cite[Proposition~2.2]{SurowskiStabArcTrans}}]\label{StableSRGLambdaMu}
Let $X$ be a strongly regular graph with parameters $(n,k,\lambda,\mu)$. If $k>\mu \neq \lambda \geq 1$, then $X$ is stable.
\end{prop}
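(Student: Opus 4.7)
The plan is to apply \cref{lem:stability (x,0)(x,1)}: I will show that any $\alpha \in \Aut(BX)$ sends pairs of the form $\{(x,0),(x,1)\}$ to pairs of the same form. Since $\lambda \geq 1$, every edge of $X$ lies on a triangle, so $X$ is non-bipartite, $BX$ is connected with bipartition $V(X)\times\{0,1\}$, and after composing with $\tau$ if necessary I may assume $\alpha$ preserves the two bipartition classes. Fix $x\in V(X)$ and write $\alpha(x,0)=(y,0)$; the goal is to show $\alpha(x,1)=(y,1)$.

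First I would describe the distance partition of $BX$ from $(x,0)$. Using diameter $2$ of $X$ together with the fact that every edge lies on a triangle, one checks directly that
\[(BX)_1(x,0)=N_X(x)\times\{1\}, \qquad (BX)_2(x,0)=(V(X)\setminus\{x\})\times\{0\},\]
\[(BX)_3(x,0)=(\{x\}\cup X_2(x))\times\{1\},\]
so $BX$ has diameter $3$ and $(x,1)\in (BX)_3(x,0)$. To distinguish $(x,1)$ among the vertices of $(BX)_3(x,0)$, I would count walks of length $3$ in $BX$ from $(x,0)$ to a given $(y,1)\in (BX)_3(x,0)$. Such walks are in bijection with walks of length $3$ in $X$ from $x$ to $y$, so the count equals $(A^3)_{x,y}$, where $A$ is the adjacency matrix of $X$.

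A direct computation using the strongly regular structure gives $(A^3)_{x,x}=k\lambda$ (each of the $k$ neighbours $a$ of $x$ contributes $(A^2)_{a,x}=\lambda$), while for $y\in X_2(x)$, splitting $N_X(x)$ into the $\mu$ common neighbours of $x$ and $y$ (each contributing $\lambda$) and the remaining $k-\mu$ neighbours (each contributing $\mu$) yields $(A^3)_{x,y}=\mu(\lambda+k-\mu)$. The key algebraic identity is
\[k\lambda-\mu(\lambda+k-\mu)=(k-\mu)(\lambda-\mu),\]
which is non-zero by the hypotheses $k>\mu$ and $\lambda\neq\mu$. Hence $(x,1)$ is the unique vertex of $(BX)_3(x,0)$ whose walk-count from $(x,0)$ is $k\lambda$.

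Since $\alpha$ preserves distances and walk counts, the analogous analysis around $y$ forces $\alpha(x,1)$, which lies in $(BX)_3(y,0)$ and has the same walk-count from $(y,0)$ as $(x,1)$ has from $(x,0)$, to equal $(y,1)$. \cref{lem:stability (x,0)(x,1)} then yields that $\alpha$ is expected, so $X$ is stable. The only mildly delicate step is justifying the exact form of the distance partition; everything else reduces to the walk-count identity above, which is the heart of the argument and explains precisely why the two conditions $k\neq\mu$ and $\lambda\neq\mu$ are both needed.
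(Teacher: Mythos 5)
Your proof is correct, but it takes a genuinely different route from the paper's. The paper proves this proposition as a two-line corollary of the more general \cref{StableTriangleEdgeNonEdge}: there, one shows that $\alpha$ maps $X_1(x)\times\{0\}$ into $X_1(z)\times\{0\}$ by comparing the number of common neighbours of an adjacent pair ($\lambda$) with that of a pair at distance $2$ ($\mu$) --- i.e.\ a $2$-walk count --- and then identifies $\alpha(x,1)$ with $(z,1)$ because the two vertices end up with identical neighbourhoods, invoking twin-freeness (which is where $k>\mu$ enters). You instead pin down $(x,1)$ directly inside $(BX)_3(x,0)$ via the $3$-walk count $(A^3)_{x,y}$, and the hypotheses enter through the single identity $k\lambda-\mu(\lambda+k-\mu)=(k-\mu)(\lambda-\mu)\neq 0$; your computations of $(A^3)_{x,x}=k\lambda$ and $(A^3)_{x,y}=\mu(\lambda+k-\mu)$, and the description of the distance partition of $BX$, all check out (the latter uses $\lambda\geq 1$, $\mu>0$ and $k>\mu$ to guarantee the relevant walk counts are positive). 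Your argument is self-contained and notably avoids the twin-freeness step altogether, which is a small structural economy; the paper's detour through \cref{StableTriangleEdgeNonEdge} buys a criterion that applies well beyond strongly regular graphs (it is reused, e.g., for the Johnson graphs in \cref{JohnsonStable}), whereas your walk-count identity is tied to the strongly regular parameters. One cosmetic remark: when you say ``$\alpha$ preserves walk counts,'' it is worth stating explicitly that any graph automorphism preserves the number of walks of a fixed length between any pair of vertices, since that invariance is the engine of your whole argument.
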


Lauri, Mizzi and Scapellato  observed in \cite{LauriMizziScapellato} that the proof given by Surowski is not complete, and they suggested \cref{StableSRGLambdaMu} should be reviewed. Surowski's original proof implies that $\Aut(BX)_{(x,0)} = \Aut(BX)_{(x,1)}$ for all $x\in V(X)$. This is a necessary, but in general not sufficient condition for a graph to be stable. For example, the Swift graph shown in \cref{SGGraph} satisfies this condition, but is non-trivially unstable (see \cite{WilsonUnExpected} for more details on the Swift graph). We will show that \cref{StableSRGLambdaMu} is correct, by proving a more general result.

\begin{figure}[!htbp]
    \centering
    \includegraphics[width = 0.75\linewidth]{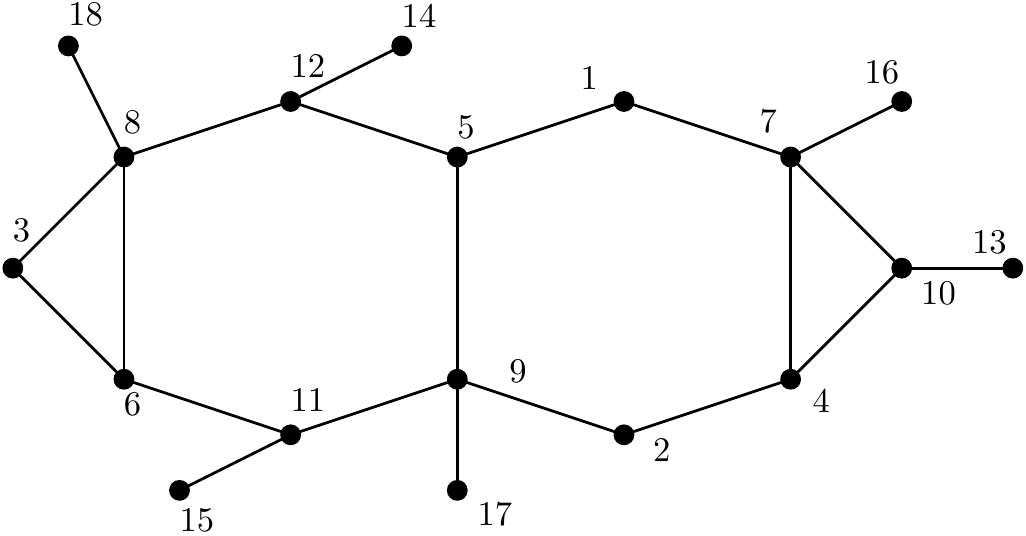}
    \caption{The Swift graph $SG$ {\cite[Figure~17.]{WilsonUnExpected}}$\quad$}
    \label{SGGraph}
\end{figure}

\begin{prop}\label{StableTriangleEdgeNonEdge}
Let $X$ be a non-trivial connected, twin-free graph such that every edge of $X$ lies on a triangle.
If for all pairs $x,y\in V(X)$ of adjacent vertices and all pairs $z,w\in V(X)$ of vertices at distance $2$ from each other, it holds that
\begin{equation*}
    |N_X(x)\cap N_X(y)| \neq |N_X(z)\cap N_X(w)|,
\end{equation*}
then $X$ is stable.
\end{prop}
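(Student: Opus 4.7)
The plan is to apply \cref{lem:stability (x,0)(x,1)}. Let $\alpha \in \Aut(BX)$; since $X$ is non-bipartite (it contains triangles), $BX$ is connected and bipartite with parts $V(X) \times \{0\}$ and $V(X) \times \{1\}$, and after replacing $\alpha$ by $\alpha\tau$ if necessary I may assume $\alpha$ preserves these parts. Then $\alpha$ is encoded by two permutations $\varphi_0, \varphi_1$ of $V(X)$ via $\alpha(v, i) = (\varphi_i(v), i)$, and by \cref{lem:stability (x,0)(x,1)} it suffices to prove that $\varphi_0 = \varphi_1$.

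The main step I would carry out first is to show $\varphi_0 \in \Aut(X)$ (the symmetric argument will give $\varphi_1 \in \Aut(X)$). For distinct $u,v \in V(X)$, the vertices $(u,0)$ and $(v,0)$ lie in the same part of $BX$, so their $BX$-distance is even. They are at $BX$-distance $2$ precisely when $N_X(u) \cap N_X(v) \neq \emptyset$, which by the triangle hypothesis is equivalent to $d_X(u,v) \in \{1,2\}$, and in that case the number of common $BX$-neighbours of $(u,0)$ and $(v,0)$ is exactly $|N_X(u) \cap N_X(v)|$. Since $\alpha$ preserves both $BX$-distances and common-neighbour counts, and the hypothesis says that the set of values $|N_X(x) \cap N_X(y)|$ taken on adjacent pairs is disjoint from the set taken on distance-$2$ pairs, $\varphi_0$ must preserve each of the three relations $d_X = 1$, $d_X = 2$, and $d_X \geq 3$ on $V(X)$; in particular it preserves adjacency, so $\varphi_0 \in \Aut(X)$.

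To conclude, I would use the cross-part adjacency in $BX$: for all $u,v \in V(X)$ one has $u \sim_X v$ iff $(u,0) \sim_{BX} (v,1)$ iff $(\varphi_0(u),0) \sim_{BX} (\varphi_1(v),1)$ iff $\varphi_0(u) \sim_X \varphi_1(v)$. Combined with $\varphi_0 \in \Aut(X)$, this yields $\varphi_0(u) \sim_X \varphi_0(v) \iff \varphi_0(u) \sim_X \varphi_1(v)$ for all $u,v$; letting $\varphi_0(u)$ range over $V(X)$, the vertices $\varphi_0(v)$ and $\varphi_1(v)$ share the same $X$-neighbourhood for every $v$, and the twin-free hypothesis forces $\varphi_0 = \varphi_1$. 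The main obstacle is really identifying the right invariant of $BX$ that separates the two kinds of ``close'' pairs in $X$; once one recognises that ``same-class distance-$2$ with $k$ common neighbours'' is preserved by $\Aut(BX)$, the combination of the numerical hypothesis and the triangle hypothesis does all the work, and the twin-free assumption is used exactly once at the very end.
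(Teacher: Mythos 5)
Your proposal is correct and follows essentially the same route as the paper: both arguments use the triangle hypothesis to identify $(BX)_2(x,0)$ with $(X_1(x)\cup X_2(x))\times\{0\}$, use the common-neighbour-count hypothesis to show that the restriction of $\alpha$ to the $0$-level preserves adjacency, and then invoke twin-freeness to force the $1$-level to follow the $0$-level, concluding via \cref{lem:stability (x,0)(x,1)}. Your phrasing in terms of $\varphi_0,\varphi_1$ and the cross-part adjacency is only a cosmetic repackaging of the paper's computation $N_{BX}(\alpha(x,1))=\alpha(X_1(x)\times\{0\})=N_{BX}(z,1)$.
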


\begin{proof}
As every edge of $X$ lies on a triangle, for every $x\in V(X)$ it holds that
\[(BX)_2(x,0) = X_1(x)\times \{0\}\cup X_2(x)\times \{0\}.\]
  
Let $\alpha\in \Aut(BX)$ be arbitrary. As $X$ is connected and non-bipartite (as it contains triangles), after possibly composing $\alpha$ with $\tau$, we may assume that $\alpha(V(X)\times \{i\})=V(X)\times\{i\}$ for $i\in \{0,1\}$. Let $x\in V(X)$ and choose $z\in V(X)$ such that $\alpha(x,0)=(z,0)$.

Let $y\in X_1(x)$ and choose $w\in V(X)$ such that $\alpha(y,0)=(w,0)$. As $\alpha$ is an automorphism of $BX$, it preserves distance, so since $(y,0)\in (BX)_2(x,0)$, we have that
\[(w,0) \in (BX)_2(\alpha(x,0)) =(BX)_2(z,0) = X_1(z)\times \{0\}\cup X_2(z)\times \{0\}.\]

The number of neighbours of $(w,0)$ lying in $(BX)_1(z,0)$ is by definition $|N_X(z)\cap N_X(w)|$, but as $\alpha$ is an automorphism of $BX$, this number also equals to $|N_X(x)\cap N_X(y)|$. As $\{x,y\}$ is an edge of $X$, if $w\in X_2(z)$, we would arrive at a contradiction with our assumption. Therefore, $w\in X_1(z)$ and $\alpha(y,0)\in X_1(z)\times \{0\}$.
We conclude that 
\[\alpha(X_1(x)\times \{0\})\subseteq X_1(z)\times \{0\}.\]

As $\alpha$ is an automorphism of $BX$, it follows that $(x,0),(z,0),(x,1)$ and $(z,1)$ are all of the same valency, so we have in fact proven that
\[N_{BX}(\alpha(x,1))=\alpha(N_{BX}(x,1))=\alpha(X_1(x)\times \{0\})= X_1(z)\times \{0\} = N_{BX}(z,1).\]

Hence, $\alpha(x,1)$ and $(z,1)$ are twins in $BX$, and as $X$ is twin-free, we conclude that $\alpha(x,1)=(z,1)$. By Lemma~\ref{lem:stability (x,0)(x,1)}, it follows that $X$ is stable.
\end{proof}

We can now obtain the original result of Surowski as a corollary of the one we just established. 

\begin{proof}[Proof of \cref{StableSRGLambdaMu}]
First we note that $X$ must be connected, since vertices in distinct connected components would have no neighbours in common, which contradicts the fact that $\mu > 0$. Next, $X$ is twin-free, as twins are non-adjacent vertices that would have $k$ neighbours in common, which contradicts the assumption that $k>\mu$. Every edge of $X$ lies on a triangle since $\lambda\geq 1$.

Finally, let $x,y,z,w\in V(X)$ be such that $\{x,y\}\in E(X)$, and $z$ and $w$ lie at distance $2$. Then $z$ and $w$ are not adjacent and we have that
\[|N_X(x)\cap N_X(y)| = \lambda \neq \mu = |N_X(z)\cap N_X(w)|.\]

It follows that $X$ is stable by \cref{StableTriangleEdgeNonEdge}.
\end{proof}

The results we established can be used to analyze the stability of various families of graphs. Below, we will show how they can be used to give a complete classification of unstable Johnson graphs.

\begin{defn}[{\hspace{1sp}\cite[p.~9]{GodsilRoyleAlgGraphTh}}]\label{JohnsonDefn}
Let $n\geq k\geq 1$ be positive integers. The \textit{Johnson graph} is the graph $J(n,k)$ with $k$-element subsets of $\{1,\ldots,n\}$ as vertices, which are adjacent if and only if the size of their intersection as sets is $k-1$.  
\end{defn}

We recall that the Johnson graph $J(n,k)$ is connected with diameter $\min(k,n-k)$. Moreover, the map assigning to each subset of $\{1,\ldots,n\}$ its complement induces a graph isomorphism $J(n,k)\cong J(n,n-k)$. 

\begin{thm}\label{JohnsonStable}
Let $n\geq k\geq 1$ be positive integers. The Johnson graph $J(n,k)$ is unstable if and only if it is one of the following:
\begin{enumerate}
    \item \label{JohnsonStable-(2,1)} the complete graph $J(2,1)\cong K_2$,
    \item \label{JohnsonStable-(4,2)} the octahedral graph $J(4,2)$,
    \item \label{JohnsonStable-(6,2)(6,4)} $J(6,2)\cong J(6,4)$ or
    \item \label{JohnsonStable-(6,3)} $J(6,3)$.
\end{enumerate}
Moreover, $J(2,1)$ is bipartite, while $J(4,2)$ is not twin-free, so both are trivially unstable.~Graphs ~$J(6,2)$ and~$J(6,3)$ are non-trivially unstable with indices of instability $28$ and $2$, respectively.
\end{thm}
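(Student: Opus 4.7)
The plan is to reduce almost all cases to the stability criterion in Proposition~\ref{StableTriangleEdgeNonEdge}, and to dispose of the handful of remaining small graphs directly.

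First, complementation $A\mapsto\{1,\dots,n\}\setminus A$ gives an isomorphism $J(n,k)\cong J(n,n-k)$, so we may assume $2k\leq n$. If $k=1$, then $J(n,1)=K_n$ and Example~\ref{CompStab} immediately gives that $K_n$ is unstable if and only if $n=2$, yielding only the case $J(2,1)$.

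Now suppose $k\geq 2$ and $2k\leq n$. A direct combinatorial count (splitting a common neighbour $C$ of $A$ and $B$ according to $|C\cap A|=|C\cap B|=k-1$) gives
\begin{equation*}
    |N(A)\cap N(B)| = n-2 \text{ if } |A\cap B|=k-1,\qquad |N(A)\cap N(B)| = 4 \text{ if } |A\cap B|=k-2,
\end{equation*}
so any two adjacent vertices share $n-2$ common neighbours and any two vertices at distance $2$ share exactly $4$. Since $n-2\geq 2$, every edge of $J(n,k)$ lies on a triangle. A non-adjacent pair of twins would have to share all $k(n-k)$ neighbours, forcing $k(n-k)=4$; under $2k\leq n$ this singles out $(n,k)=(4,2)$, whose three antipodal pairs in the octahedron are indeed twins, so $J(n,k)$ is twin-free in all other cases. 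Hence, provided $k\geq 2$, $2k\leq n$, $(n,k)\neq(4,2)$, and $n\neq 6$, the two common-neighbour counts $n-2$ and $4$ differ, and Proposition~\ref{StableTriangleEdgeNonEdge} delivers stability. This leaves only $J(2,1)$, $J(4,2)$, $J(6,2)$, and $J(6,3)$ to examine.

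The two trivial cases are immediate: $J(2,1)=K_2$ is bipartite with $\Aut(K_2)\neq 1$, and $J(4,2)$ has antipodal twins, so both are trivially unstable. For $J(6,2)$ and $J(6,3)$ the condition $\lambda=\mu=4$ means none of our general criteria applies, and this is where the main obstacle lies. I would handle these two graphs by exhibiting explicit unexpected automorphisms of their bipartite double covers (an analysis that is feasible by hand given the symmetries of Johnson graphs on a $6$-set, and in any event straightforwardly verifiable in \textsf{SageMath} or \textsf{GAP}), and then computing the index of instability as $|\Aut(BJ(6,k))|/|\Aut(J(6,k))\times S_2|$, which produces the claimed values $28$ and $2$ respectively.
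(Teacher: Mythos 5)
Your proposal is correct and follows essentially the same route as the paper: compute the common-neighbour counts $n-2$ (adjacent pairs) and $4$ (distance-$2$ pairs), apply Proposition~\ref{StableTriangleEdgeNonEdge} to get stability for $n\neq 6$, isolate the trivially unstable cases $J(2,1)$ and $J(4,2)$, and settle $J(6,2)$ and $J(6,3)$ by explicit (computer-assisted) verification. The only cosmetic differences are your upfront reduction to $2k\leq n$ via complementation and your sketch of the counting argument that the paper merely asserts.
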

\begin{proof}
It is easy to see that the only non-trivial bipartite Johnson graph is $J(2,1)$, while the only Johnson graph admitting twins is $J(4,2)$. Hence, we assume that $(n,k)\not \in \{(2,1),(4,2)\}$.

It can be checked that any two adjacent vertices in $J(n,k)$ have $n-2$ neighbours in common, while any two vertices at distance $2$ have $4$ neighbours in common.

Applying \cref{StableTriangleEdgeNonEdge}, we conclude that $J(n,k)$ is stable whenever $n\neq 6$. Letting $n=6$, we see that $J(6,1)\cong J(6,5)\cong K_6$ is stable by \cref{CompStab}, while it can be verified with the help of a computer (for example using Magma \cite{MAGMA}) that $J(6,2)\cong J(6,4)$ and $J(6,3)$ are unstable with indicated indices of instability.
\end{proof}

\begin{rem}
Stability of Johnson graphs has been studied by Mirafzal in \cite{Mirafzal}, where it has been incorrectly claimed that all Johnson graphs are stable (see {\cite[Theorem~3.20]{Mirafzal}}).
\end{rem}

\section{A stability condition for triangle-free graphs}\label{SectionTriangleFree}

In the previous section, we considered stability of graphs containing many triangles. On the other end of the spectrum, we consider a stability criterion for graphs that do not contain any triangles whatsoever.

\begin{thm}\label{TriangleFreeStable}
    Let $X$ be a connected, non-bipartite and twin-free graph. If $X$ is triangle-free of diameter $2$, then $X$ is stable.
\end{thm}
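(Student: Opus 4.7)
The plan is to apply \cref{lem:stability (x,0)(x,1)}: given an arbitrary $\alpha\in\Aut(BX)$, it suffices to show that $\alpha$ preserves the fibres $\{(x,0),(x,1)\}$ of the projection $BX\to X$. Since $X$ is connected and non-bipartite, $BX$ is connected and bipartite with parts $V(X)\times\{0\}$ and $V(X)\times\{1\}$. After composing $\alpha$ with $\tau$ if necessary, I may assume $\alpha$ preserves this bipartition; then $\alpha(x,i)=(\varphi_i(x),i)$ for bijections $\varphi_0,\varphi_1\colon V(X)\to V(X)$, and the goal becomes $\varphi_0=\varphi_1$.

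The key step is a clean identification of distance-$2$ neighbourhoods in $BX$. Two distinct vertices $(x,0),(y,0)$ are at $BX$-distance $2$ if and only if $N_X(x)\cap N_X(y)\neq\emptyset$. Triangle-freeness forces $N_X(x)\cap N_X(y)=\emptyset$ whenever $x\sim_X y$, while the diameter-$2$ hypothesis forces $N_X(x)\cap N_X(y)\neq\emptyset$ whenever $x\neq y$ and $x\not\sim_X y$. Hence
\[(BX)_2((x,0))=N_{\bar X}(x)\times\{0\},\]
where $\bar X$ denotes the complement of $X$. Because $\alpha$ preserves distances and the bipartition, $\varphi_0$ (and likewise $\varphi_1$) is an automorphism of $\bar X$; since $\Aut(\bar X)=\Aut(X)$, both maps lie in $\Aut(X)$.

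To finish, I use that $\alpha$ preserves edges of $BX$, which translates to $x\sim_X y \iff \varphi_0(x)\sim_X\varphi_1(y)$ for all $x,y$. Combining this with $\varphi_0\in\Aut(X)$, i.e.\ $x\sim_X y\iff\varphi_0(x)\sim_X\varphi_0(y)$, I obtain that for every $y$ and every $u\in V(X)$ (take $u=\varphi_0(x)$, with $x$ ranging over $V(X)$), $u\sim_X\varphi_0(y) \iff u\sim_X\varphi_1(y)$. So $\varphi_0(y)$ and $\varphi_1(y)$ have identical neighbourhoods in $X$; twin-freeness then forces $\varphi_0(y)=\varphi_1(y)$ for every $y$, giving $\varphi_0=\varphi_1$, and \cref{lem:stability (x,0)(x,1)} concludes.

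The main substantive point is the distance-$2$ characterisation $(BX)_2((x,0))=N_{\bar X}(x)\times\{0\}$: this is precisely where both hypotheses, triangle-freeness and diameter $2$, play their essential role, and it is what makes the $\varphi_i$ visible as automorphisms of $X$. Once that identity is in place, the remainder is a short algebraic manipulation resting on the elementary identity $\Aut(X)=\Aut(\bar X)$ and the twin-free hypothesis.
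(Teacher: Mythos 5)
Your proof is correct, and it takes a genuinely different route from the paper's. The paper works at distance $5$: it shows $(x,1)\in (BX)_5(x,0)\subseteq \{(x,1)\}\cup (X_2(x)\setminus S(x))\times\{1\}$, where $S(x)$ is the set of vertices in $X_2(x)$ having a neighbour in $X_2(x)$ (non-bipartiteness supplies a $5$-cycle through $x$, hence the membership), and then uses twin-freeness to show every other candidate in that fifth sphere has strictly smaller valency, so $(x,1)$ is the unique vertex of $(BX)_5(x,0)$ with the valency of $(x,0)$ and must be fixed relative to $(x,0)$ by any colour-preserving automorphism. You instead work at distance $2$: the identity $(BX)_2((x,0))=N_{\bar X}(x)\times\{0\}$ (triangle-freeness kills common neighbours of adjacent pairs, diameter $2$ supplies them for non-adjacent pairs) shows that the two layer maps $\varphi_0,\varphi_1$ are automorphisms of $\bar X$, hence of $X$, and then edge-preservation $x\sim y\iff\varphi_0(x)\sim\varphi_1(y)$ combined with $\varphi_0\in\Aut(X)$ and surjectivity of $\varphi_0$ forces $N_X(\varphi_0(y))=N_X(\varphi_1(y))$, so twin-freeness collapses $\varphi_0=\varphi_1$ and Lemma~\ref{lem:stability (x,0)(x,1)} applies. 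Your argument is shorter, avoids the $5$-cycle/valency analysis entirely, and isolates cleanly where each hypothesis enters; the paper's argument has the feature of exhibiting an explicit distance invariant ($(x,1)$ as the unique full-valency antipode in the fifth sphere), which is closer in spirit to the Surowski-style criteria of \cref{SectionTriangles}. Both are complete proofs.
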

\begin{proof}
Let $x\in V(X)$ be arbitrary and define
\[S(x)\coloneqq \{z\in X_2(x)\mid X_1(z)\cap X_2(x)\neq \emptyset\}.\]

We first establish the following
\[\text{$(x,1)\in (BX)_5(x,0)\subseteq \{(x,1)\}\cup(X_2(x)\setminus S(x))\times \{1\}.$}\]

Clearly, $(x,1)\not\in (BX)_1(x,0)$ and as $X$ is triangle-free, it also holds that $(x,1)\not\in (BX)_3(x,0)$. Note that as $X$ is non-bipartite, $S(x)$ is not empty (otherwise, $X_1(x)\cup (X_2(x)\cup \{x\})$ would be a bipartition of $X$). Hence, we obtain a path $x\sim y\sim z \sim w$ with $y\in X_1(x)$ and $z,w\in X_2(x)$. We know that $w$ must have a neighbour $v\in X_1(x)$, which is distinct from $y$ as $X$ is triangle-free. Hence, we obtained a $5$-cycle formed by the vertices $x,y,z,w$ and $v$, which implies that $(x,1)\in (BX)_5(x,0)$.

Let $(w,1)\in (BX)_5(x,0)$. We may assume that $w\neq x$. Note that $w\not\in X_1(x)$ as then $(w,1)\in (BX)_1(x,0)$. It follows that $w\in X_2(x)$. If $w$ had a neighbour $z\in X_2(x)$, then using an arbitrary $y\in X_1(x)\cap X_1(z)$, we would be able to construct a path $(x,0)\sim (y,1)\sim (z,0)\sim (w,1)$, showing that $(w,1)$ is at distance at most $3$ from $(x,0)$. It follows that $w\in X_2(x)\setminus S(x)$. This finishes the proof of the claim.

Let $y\in X_2(x)\setminus S(x)$. Then all neighbours of $y$ are contained in $X_1(x)$, by definition of $S(x)$. Since $X$ is twin-free, it follows that $y$ has smaller valency than $x$. Consequently $(y,1)$ has smaller valency than $(x,1)$. We conclude that
\[\text{$(x,1)$ is the unique element of $(BX)_5(x,0)$ of the same valency as $(x,0)$}.\]

Let $\alpha\in \Aut(BX)$ be arbitrary. After possibly composing $\alpha$ with $\tau$, we may assume that it preserves the colour classes of $BX$. Let $\alpha(x,0)=(y,0)$ for $x,y\in V(X)$.

As $\alpha$ preserves distances in $BX$, it follows that
\[\alpha(x,1)\in (BX)_5(\alpha(x,0))= (BX)_5(y,0).\]

Note that the valency of $\alpha(x,1)$ is equal to the valency of $(y,0)$. We conclude that $\alpha(x,1)=(y,1)$ and by Lemma~\ref{lem:stability (x,0)(x,1)} it follows that $X$ is stable.
\end{proof}

This result can be used to slightly extend Surowski's result for strongly regular graphs (see \cref{StableSRGLambdaMu}).

\begin{cor}\label{SRGLambda=0}
Let $X$ be an $(n,k,\lambda,\mu)$-strongly regular graph. If $k > \mu $ and $\lambda = 0$, then $X$ is stable.
\end{cor}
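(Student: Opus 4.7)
The plan is to verify that the hypotheses of \cref{TriangleFreeStable} hold, so that stability follows immediately. The graph $X$ is triangle-free by the assumption $\lambda = 0$ and has diameter $2$ by definition of a strongly regular graph. Connectedness of $X$ is also built into the definition of a strongly regular graph. What remains is to check that $X$ is non-bipartite and twin-free.

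For twin-freeness, I would argue as in the proof of \cref{StableSRGLambdaMu}. Two non-adjacent twins would have $k$ common neighbours, forcing $k = \mu$, which contradicts $k > \mu$. Two adjacent twins $x,y$ would satisfy $N_X(x)\setminus\{y\} = N_X(y)\setminus\{x\}$, giving $\lambda = |N_X(x) \cap N_X(y)| = k - 1$; combined with $\lambda = 0$ this forces $k = 1$, i.e.\ $X = K_2$, which is excluded because then $\mu$ is undefined (or because $k>\mu\geq 0$ together with $\lambda=0$ and $k=1$ still falls under the trivial case handled below).

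The main (mildly delicate) step is showing that $X$ is non-bipartite. I would argue that any connected bipartite graph of diameter $2$ is complete bipartite: if $\{A,B\}$ is the bipartition and $x\in A$, then every vertex of $B$ is at distance $1$ from $x$ and every vertex of $A\setminus\{x\}$ is at distance $2$ from $x$, so $x$ must be adjacent to every vertex of $B$. Hence a bipartite strongly regular graph is $K_{m,m}$ for some $m\geq 1$, which has parameters with $k = m = \mu$, contradicting the assumption $k > \mu$. Therefore $X$ is non-bipartite.

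With all four hypotheses (connected, non-bipartite, twin-free, triangle-free of diameter~$2$) verified, \cref{TriangleFreeStable} applies and yields that $X$ is stable. I expect no further obstacle; the proof is essentially bookkeeping, with the bipartite case being the only point requiring a short separate argument.
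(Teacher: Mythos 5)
Your proposal is correct and follows the same overall strategy as the paper: verify the four hypotheses of \cref{TriangleFreeStable} and apply it. The only point where you diverge is the non-bipartiteness check. The paper disposes of it in one line by asserting that every edge lies on a $5$-cycle, whereas you argue that a connected bipartite graph of diameter $2$ must be complete bipartite, hence $K_{m,m}$ with $k=m=\mu$, contradicting $k>\mu$. Your version is arguably the more self-contained of the two, since the $5$-cycle claim itself needs a small argument (using $\mu\geq 1$), while your route uses only the diameter and the parameter inequality; both are valid. One small remark on twin-freeness: under the paper's definition, twins have equal open neighbourhoods, so adjacent vertices can never be twins in a simple graph, and your separate treatment of ``adjacent twins'' is unnecessary (though harmless).
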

\begin{proof}

We make the following observations.
\begin{itemize}
    \item As $X$ is a strongly regular graph, it is connected with diameter $2$ by definition.
    \item $k>\mu$ implies that $X$ is twin-free.
    \item $\lambda = 0$ implies that $X$ is triangle-free.
    \item Every edge lies on a $5$-cycle, proving that $X$ is non-bipartite.
\end{itemize}

We can now apply \cref{TriangleFreeStable} to conclude that $X$ is stable.
\end{proof}

It is worth noting that besides the trivial example given by $K_{n,n}$ with $n\geq 2$, which is $(2n,n,0,n)$-srg, there are only seven other currently known strongly regular graphs that are triangle-free, as explained by Godsil in {\cite{AlgCombProblemsGODSIL}}.

As a corollary of the results we derived so for, we obtain the following stability criterion for strongly regular graphs.

\begin{prop}\label{SRGStable}
Let $X$ be an $(n,k,\lambda,\mu)$-strongly regular graph. If $X$ is non-trivially unstable, then $\lambda = \mu > 0$.
\end{prop}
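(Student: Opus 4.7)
The plan is to derive this purely by combining the two previous stability criteria for strongly regular graphs, namely \cref{StableSRGLambdaMu} and \cref{SRGLambda=0}. Since $X$ is non-trivially unstable, it is twin-free by definition, and the remark following the definition of strongly regular graphs gives $k>\mu$ immediately. Also, $\mu>0$ holds automatically from the very definition of a strongly regular graph. So the content of the proposition is really that $\lambda=\mu$.

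First, I would rule out $\lambda=0$. Indeed, if $\lambda=0$ held, then together with $k>\mu$, \cref{SRGLambda=0} would force $X$ to be stable, contradicting non-trivial instability. Hence $\lambda\geq 1$.

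Next, I would rule out $\lambda\neq\mu$. If $\lambda\geq 1$ and $\lambda\neq\mu$, then all hypotheses of \cref{StableSRGLambdaMu} hold (recall we already have $k>\mu$), and $X$ would again be stable, a contradiction. Therefore $\lambda=\mu$, and since $\lambda\geq 1$, we conclude $\lambda=\mu>0$.

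There is no real obstacle to overcome; the statement is a direct synthesis of the preceding two results, and the only thing to verify carefully is that the standing assumptions of non-trivial instability supply exactly the twin-free and connectedness conditions needed to invoke them.
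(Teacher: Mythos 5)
Your proposal is correct and follows the same route as the paper: both derive $k>\mu>0$ from twin-freeness and the definition, then use \cref{SRGLambda=0} to exclude $\lambda=0$ and \cref{StableSRGLambdaMu} to force $\lambda=\mu$. No difference worth noting.
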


\begin{proof}
As $X$ is non-trivially unstable, it is connected and twin-free, so $k>\mu>0$. As $X$ is unstable, \cref{SRGLambda=0} shows that $\lambda >0$. Finally, it follows by \cref{StableSRGLambdaMu} that $\mu = \lambda$, as desired.
\end{proof}

\begin{rem}
In \cite{SurowskiStabArcTrans}, Surowski constructs an infinite family of non-trivially unstable strongly regular graphs with $\lambda =\mu$.
\end{rem}

\begin{problem}
Characterize non-trivially unstable strongly regular graphs.
\end{problem}

\section{The number of unstable graphs}
\label{sec:construction}

An interesting question to consider is how dense is the set of all non-trivially unstable graphs in the set of all connected, non-bipartite and twin-free graphs. We will now present an interesting construction of non-trivially unstable graphs which shows that every connected, non-bipartite, twin-free graph of order $n$ is an induced subgraph of a non-trivially unstable graph of order $n+4$.

\begin{construction}\label{constructionX(A,B)}
Let $X$ be a graph and $A$ and $B$ be subsets of $V(X)$. Let $X(A,B)$ denote the graph with
\begin{itemize}
    \item $V(X(A,B))\coloneqq V(X)\cup \{a_1,a_2,b_1,b_2\}$, where $a_1,a_2,b_1$ and $b_2$ are four distinct vertices with the property that $a_1,a_2,b_1,b_2\not\in V(X)$,
    \item $E(X(A,B))\coloneqq E(X)\cup\{\{a_1,b_1\},\{a_2,b_2\}\}\cup  \{\{a_1,a\},\{a_2,a\}\mid a\in A\}\cup \{\{b_1,b\},\{b_2,b\}\mid b\in B\}$.
\end{itemize}
In particular, $X(A,B)$ is obtained from $X$ by adding two new edges $\{a_1,b_1\}$ and $\{a_2,b_2\}$ to $X$, and then joining $a_1$ and $a_2$ with every vertex in $A$ and $b_1$ and $b_2$ with every vertex in $B$ (see Figure~\ref{fig:construction}).
\end{construction}

\begin{figure}[!htbp]
    \centering
    \includegraphics{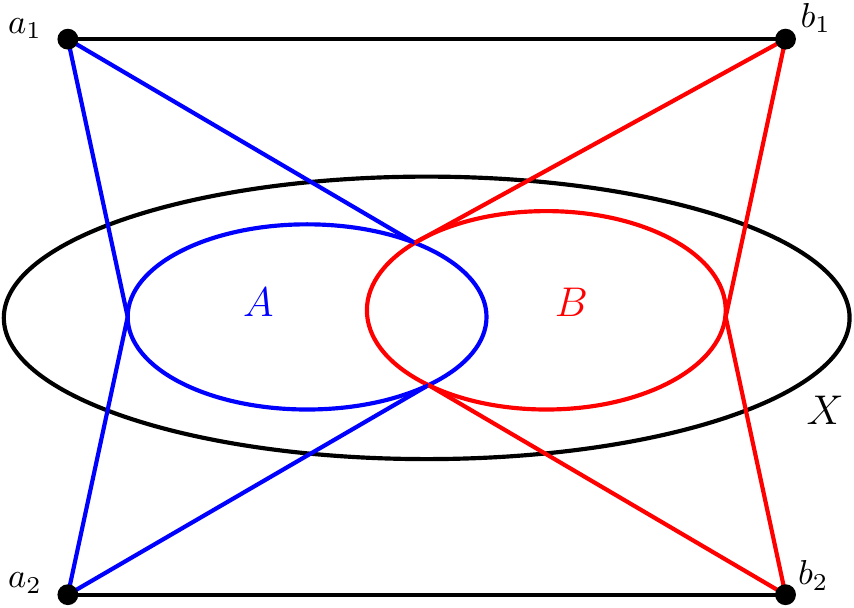}
    \caption{The construction of $X(A,B)$ from a graph $X$.}
    \label{fig:construction}
\end{figure}

\begin{prop}\label{WX}
Let $X$ be a graph and let $A$ and $B$ be subsets of $V(X)$. Then $X(A,B)$ is unstable.  Moreover, if $X$ is connected, non-bipartite and twin-free, and at least one of $A$ and $B$ is non-empty, then $X(A,B)$ is non-trivially unstable.
\end{prop}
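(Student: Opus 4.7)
The plan is to exhibit an explicit unexpected automorphism of $B(X(A,B))$. Let $Y := X(A,B)$. The symmetry in the construction---namely that $a_1$ and $a_2$ play analogous roles with respect to $A$ (and similarly for $b_1,b_2$ with respect to $B$), while $\{a_1,b_1\}$ and $\{a_2,b_2\}$ are swapped by the obvious involution of the pair of added edges---suggests considering the map $\alpha: V(BY) \to V(BY)$ that fixes $(v,i)$ for every $v \in V(X)$ and $i \in \{0,1\}$, fixes $(a_1,0), (a_2,0), (b_1,1), (b_2,1)$, and swaps the pairs $(a_1,1) \leftrightarrow (a_2,1)$ and $(b_1,0) \leftrightarrow (b_2,0)$. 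The key point is that $\alpha$ is asymmetric between the two layers of $BY$: in layer $0$ it swaps the $b$-vertices but fixes the $a$-vertices, while in layer $1$ it does the opposite.

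First, I would verify that $\alpha \in \Aut(BY)$ by checking each type of edge of $Y$:
\begin{itemize}
    \item edges within $X$ lift to edges fixed pointwise by $\alpha$;
    \item the edge $\{a_1,b_1\}$ contributes the lifts $\{(a_1,0),(b_1,1)\}$ (fixed by $\alpha$) and $\{(a_1,1),(b_1,0)\}$, which is sent to $\{(a_2,1),(b_2,0)\}$, the second lift of $\{a_2,b_2\}$; the edge $\{a_2,b_2\}$ is handled symmetrically;
    \item an edge $\{a_j, a\}$ with $a \in A$ contributes $\{(a_j,0),(a,1)\}$ (fixed by $\alpha$) and $\{(a_j,1),(a,0)\}$, which is sent to $\{(a_{3-j},1),(a,0)\}$, again a valid lift since $a \in A$ is adjacent to both $a_1$ and $a_2$; and symmetrically for edges $\{b_j,b\}$ with $b \in B$.
\end{itemize}

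Second, I would show $\alpha$ is unexpected. Any expected automorphism has the form $\tau^k \overline{\varphi}$ for some $\varphi \in \Aut(Y)$ and $k \in \{0,1\}$. Since $\alpha(a_1,0) = (a_1,0)$ lies in layer $0$, the case $k=1$ is ruled out. If $k=0$, then $\alpha(v,i) = (\varphi(v),i)$, which combined with $\alpha(v,0) = (v,0)$ for all $v$ forces $\varphi = \id$, contradicting $\alpha(a_1,1) = (a_2,1)$. Thus $Y$ is unstable.

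For the moreover clause, assume $X$ is connected, non-bipartite and twin-free, with (say) $A \neq \emptyset$. Connectedness of $Y$ is immediate since $a_1,a_2$ are attached to $A \subseteq V(X)$ and $b_1,b_2$ attach to $a_1,a_2$ (if $B \neq \emptyset$ the same argument works symmetrically). Non-bipartiteness is inherited from $X$ via any odd cycle. For twin-freeness, one compares neighbourhoods: the $\{a_1,a_2,b_1,b_2\}$-pairs have neighbourhoods $A \cup \{b_1\}, A \cup \{b_2\}, B \cup \{a_1\}, B \cup \{a_2\}$, which are pairwise distinct (using $a_1,a_2,b_1,b_2 \notin V(X)$ and that $a_j \in N(b_j)$ but $a_j \notin N(a_j)$); no such new vertex can be a twin of any $v \in V(X)$ because that would force $v$ to be adjacent to exactly one of $b_1,b_2$, impossible from the construction; and two vertices of $V(X)$ being twins in $Y$ would force them to be twins in $X$. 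The main bookkeeping obstacle is simply the edge-by-edge verification that $\alpha$ is an automorphism, after which the rest is a clean check.
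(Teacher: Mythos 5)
Your proof is correct and takes essentially the same approach as the paper: the paper exhibits the unexpected automorphism $\gamma^*=((a_1,0),(a_2,0))((b_1,1),(b_2,1))$, and your $\alpha$ is just its conjugate by $\tau$ (equivalently, $\gamma^*$ composed with the lift of the $X(A,B)$-automorphism swapping $a_1\leftrightarrow a_2$ and $b_1\leftrightarrow b_2$), so the two differ by an expected automorphism. Your verification of twin-freeness is in fact slightly more detailed than the paper's, which only remarks that the four new vertices have pairwise distinct neighbourhoods.
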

\begin{proof}
It is easy to check that the permutation
\[\gamma^*\coloneqq ((a_1,0),(a_2,0))((b_1,1),(b_2,1))\]
of the vertex set of  $B(X(A,B))$ swapping $(a_1,0)$ with $(a_2,0)$ and $(b_1,1)$ with $(b_2,1)$, while fixing all other vertices, is an unexpected automorphism of $B(X(A,B))$ of order $2$, showing that $X(A,B)$ is unstable. 

Suppose now that $X$ is connected, non-bipartite and twin-free, and that $A$ is a non-empty set of vertices of $X$. It is clear that $X(A,B)$ is connected. Since $X$ is non-bipartite, and $X$ is an induced subgraph of $X(A,B)$ it follows that $X(A,B)$ is also non-bipartite. Observe that it follows from the definition that vertices $a_1,a_2,b_1,b_2$ have different sets of neighbours. Since no two vertices of $X$ have the same sets of neighbours, it follows that $X(A,B)$ is twin-free, showing that $X(A,B)$ is non-trivially unstable.
\end{proof}

In Table~\ref{tab:number of graphs}, for a positive integer $n$ between $3$ and $10$, the number given in the second row is the number of connected, non-bipartite, twin-free graphs of order $n$, the number in the third row is the number of non-trivially unstable graphs of order $n$, and the number in the fourth row is the number of non-trivially unstable graphs that can be realized using Construction~\ref{constructionX(A,B)}. The entries of Table~\ref{tab:number of graphs} have been obtained using Magma \cite{MAGMA}.

\begin{table}[h]
     \captionsetup{width=\linewidth}
\caption{\label{tab:number of graphs} The number of non-trivially unstable graphs up to 10 vertices.}
\begin{adjustbox}{max width=\textwidth}
\begin{tabular}{|c|c|c|c|c|c|c|c|c|}
\hline
    $n$ & 3 & 4 & 5 & 6 & 7 & 8 & 9 & 10 \\
    \hline
   Connected, non-bipartite, twin-free  & 1 & 2 & 10 & 56 & 498 & 7.397 & 197.612  & 9.807.191 \\
   \hline
   Non-trivially unstable & 0 & 0 & 1 & 6 & 43 & 395 &  5.113 & 105.919\\
   \hline
   $X(A,B)$ & 0 & 0 & 1 & 5 & 37 & 330 & 4.374 &  93.610 \\
   \hline
\end{tabular}
\end{adjustbox}
\end{table}

The data in Table~\ref{tab:number of graphs} suggests that the density of non-trivially unstable graphs among all connected, non-bipartite, twin-free graphs goes to $0$ as $n$ tends to infinity. On the other hand, it seems that almost all non-trivially unstable graphs can be constructed using Construction~\ref{constructionX(A,B)}. 


We conclude the paper with the following open problem.
\begin{problem}
Give an approximation formula for the number of non-trivially unstable graphs of order $n$.
\end{problem}

\section*{Acknowledgements}

The work of Ademir Hujdurovi\'c  is supported in part by the Slovenian Research Agency (research program P1-0404 and research projects J1-1691, J1-1694, J1-1695, N1-0102, N1-0140, N1-0159, J1-2451, N1-0208 and J1-4084).

\bibliography{References}

\begin{thebibliography}{10}

\bibitem{MAGMA}
W.~Bosma, J.~Cannon, and C.~Playoust.
\newblock The {M}agma algebra system. {I}. {T}he user language.
\newblock {\em J. Symbolic Comput.}, 24(3-4):235--265, 1997.
\newblock Computational algebra and number theory (London, 1993).

\bibitem{DobsonBook}
T.~Dobson, A.~Malni\v{c}, and D.~Maru\v{s}i\v{c}.
\newblock {\em Symmetry in graphs}, volume 198 of {\em Cambridge Studies in
  Advanced Mathematics}.
\newblock Cambridge University Press, Cambridge, 2022.

\bibitem{FengKutnar2fold}
Y.-Q. Feng, K.~Kutnar, A.~Malnič, and D.~Marušič.
\newblock On 2-fold covers of graphs.
\newblock {\em Journal of Combinatorial Theory, Series B}, 98(2):324--341,
  2008.

\bibitem{FernandezHujdurovicCirc}
B.~Fernandez and A.~Hujdurović.
\newblock Canonical double covers of circulants.
\newblock {\em Journal of Combinatorial Theory, Series B}, 154:49--59, 2022.

\bibitem{GodsilRoyleAlgGraphTh}
C.~Godsil and G.~F. Royle.
\newblock {\em Algebraic graph theory}, volume 207.
\newblock Springer Science \& Business Media, 2001.

\bibitem{AlgCombProblemsGODSIL}
C.~D. Godsil.
\newblock Problems in algebraic combinatorics.
\newblock {\em Electron. J. Combin.}, pages F1--F1, 1995.

\bibitem{HIK}
R.~Hammack, W.~Imrich, and S.~Klav\v{z}ar.
\newblock {\em Handbook of product graphs}.
\newblock Discrete Mathematics and its Applications (Boca Raton). CRC Press,
  Boca Raton, FL, second edition, 2011.
\newblock With a foreword by Peter Winkler.

\bibitem{HI}
R.~H. Hammack and W.~Imrich.
\newblock Vertex-transitive direct products of graphs.
\newblock {\em Electron. J. Combin.}, 25(2):Paper No. 2.10, 16, 2018.

\bibitem{Hujdurovic}
A.~Hujdurovi\'{c}.
\newblock Graphs with {C}ayley canonical double covers.
\newblock {\em Discrete Math.}, 342(9):2542--2548, 2019.

\bibitem{HMMAtMost7}
A.~Hujdurović, Đorđe Mitrović, and D.~W. Morris.
\newblock Automorphisms of the double cover of a circulant graph of valency at
  most 7, 2021.
\newblock {\tt \href{https://arxiv.org/abs/2108.05164}{arXiv:2108.05164
  [math.CO]}}.

\bibitem{HMMAuto}
A.~Hujdurović, Đorđe Mitrović, and D.~W. Morris.
\newblock On automorphisms of the double cover of a circulant graph.
\newblock {\em The Electronic Journal of Combinatorics}, 28(4)(P4.43), 2021.

\bibitem{Krnc}
M.~Krnc and T.~Pisanski.
\newblock Generalized {P}etersen graphs and {K}ronecker covers.
\newblock {\em Discrete Math. Theor. Comput. Sci.}, 21(4):Paper No. 15, 16,
  2019.

\bibitem{LauriMizziScapellato}
J.~Lauri, R.~Mizzi, and R.~Scapellato.
\newblock Unstable graphs: A fresh outlook via tf-automorphisms.
\newblock {\em ARS MATHEMATICA CONTEMPORANEA}, 8(1):115--131, 2014.

\bibitem{maruvsivc1989characterization}
D.~Maru{\v{s}}i{\v{c}}, R.~Scapellato, and N.~Z. Salvi.
\newblock A characterization of particular symmetric (0, 1) matrices.
\newblock {\em Linear Algebra and its Applications}, 119:153--162, 1989.

\bibitem{Matsushita}
T.~Matsushita.
\newblock Graphs whose {K}ronecker covers are bipartite {K}neser graphs.
\newblock {\em Discrete Math.}, 344(4):Paper No. 112264, 6, 2021.

\bibitem{Mirafzal}
S.~M. Mirafzal.
\newblock On the automorphism groups of connected bipartite irreducible graphs.
\newblock {\em Proc. Indian Acad. Sci. Math. Sci.}, 130(1):Paper No. 57, 15,
  2020.

\bibitem{morris2021automorphisms}
D.~W. Morris.
\newblock On automorphisms of direct products of cayley graphs on abelian
  groups.
\newblock {\em The Electronic Journal of Combinatorics}, page P3.5, 2021.

\bibitem{Kneser2017}
T.~M\"{u}tze and P.~Su.
\newblock Bipartite {K}neser graphs are {H}amiltonian.
\newblock {\em Combinatorica}, 37(6):1207--1219, 2017.

\bibitem{RegularEmbeddingsNedela}
R.~Nedela and M.~Škoviera.
\newblock Regular embeddings of canonical double coverings of graphs.
\newblock {\em Journal of Combinatorial Theory, Series B}, 67(2):249--277,
  1996.

\bibitem{Pacco}
W.~Pacco and R.~Scapellato.
\newblock Digraphs having the same canonical double covering.
\newblock {\em Discrete Math.}, 173(1-3):291--296, 1997.

\bibitem{QinXiaZhou}
Y.-L. Qin, B.~Xia, and S.~Zhou.
\newblock Stability of circulant graphs.
\newblock {\em Journal of Combinatorial Theory, Series B}, 136:154--169, 2019.

\bibitem{Qin21}
Y.-L. Qin, B.~Xia, and S.~Zhou.
\newblock Canonical double covers of generalized {P}etersen graphs, and double
  generalized {P}etersen graphs.
\newblock {\em J. Graph Theory}, 97(1):70--81, 2021.

\bibitem{SurowskiStabArcTrans}
D.~B. Surowski.
\newblock Stability of arc-transitive graphs.
\newblock {\em Journal of Graph Theory}, 38(2):95--110, 2001.

\bibitem{WallerDouble}
D.~A. Waller.
\newblock Double covers of graphs.
\newblock {\em Bulletin of the Australian Mathematical Society},
  14(2):233--248, 1976.

\bibitem{WilsonUnExpected}
S.~Wilson.
\newblock Unexpected symmetries in unstable graphs.
\newblock {\em Journal of Combinatorial Theory, Series B}, 98(2):359--383,
  2008.

\bibitem{ZelinkaDouble}
B.~Zelinka.
\newblock On double covers of graphs.
\newblock {\em Mathematica Slovaca}, 32(1):49--54, 1982.

\end{thebibliography}
\bibliographystyle{abbrv}
\end{document}